\title{Subdifferential of the joint numerical radius}
\author[Grover]{Priyanka Grover}
\author[Singla]{Sushil Singla}
\address{Department of Mathematics, Shiv Nadar University, NH-91, Tehsil Dadri, Gautam Buddha Nagar, U.P. 201314, India.}
\email{priyanka.grover@snu.edu.in, ss774@snu.edu.in}
\subjclass[2010]{15A60, 58C20, 47A12}
\keywords{Subdifferential set, Gateuax derivative, Joint numerical range, trace class, Birkhoff-James orthogonality}
\newcommand{\C}{\mathbb{C}}
\newcommand{\R}{\mathbb{R}}
\newcommand{\tr}{\mathop{{\rm trace}}}
\newcommand{\conv}{\mathop{{\rm co}}}
\newcommand{\h}{\mathop{{\rm Re}}}
\newcommand{\hc}{{\mathcal H}}
\newcommand{\hk}{{\mathcal K}}
\newcommand{\dist}{{\mathop{\rm dist}}}
\newtheorem{theorem}{Theorem}
\theoremstyle{definition}
\newtheorem{corollary}{Corollary}
\newtheorem{prop}{Proposition}
\newcommand{\Hil}{\mathcal{H}}
\newcommand{\tens}[1]{\mathbin{\mathop{\otimes}\limits_{#1}}}
\newcommand{\bra}{\langle}
\newcommand{\ket}{\rangle}
\begin{document}
\begin{abstract}
An expression for the subdifferential of the joint numerical radius is obtained. Its applications to the best approximation problems in the joint numerical radius are discussed.

\end{abstract}

\setlength{\parindent}{0pt}
\setlength{\parskip}{1.6ex}

\maketitle

\numberwithin{theorem}{section}
\numberwithin{corollary}{section}
\numberwithin{equation}{section}
\numberwithin{defn}{section}
\numberwithin{prop}{section}

\section{Introduction}
Let $\mathbb M_{p,q}(\mathbb C)$ be the set of $p\times q$ matrices over $\mathbb C$ with a given norm. Let $f: \mathbb M_{p,q}(\mathbb C)\rightarrow \R$ be a continuous convex function. Let $A \in \mathbb M_{p,q}(\mathbb C)$. The \emph{subdifferential} of $f$ at $A$, denoted by $\partial f(A)$, is defined as
$$\partial f(A)=\{C\in\mathbb M_{p,q}(\mathbb C): f(B)\geq f(A)+\h\tr((B-A)^*C) \text{ for all } B\in\mathbb M_{p,q}(\mathbb C)\}.$$
The right hand derivative of $f$ and the subdifferential of $f$ are related as follows. For $B\in\mathbb M_{p,q}(\mathbb C)$,
 \begin{equation}\label{chap5:7898}\lim\limits_{t\rightarrow 0^+} \dfrac{f(A+tB) - f(A)}{t} = \max\{\h\tr(C^*A) : C\in \partial f(A)\}.\end{equation} 
Characterizations of subdifferentials of matrix norms has been of interest to many mathematicians. Let $\mathbb M_n(\mathbb C)$ be the set of $n\times n$ matrices over $\mathbb C$. For $A\in\mathbb M_n(\mathbb C)$, let  $s_1(A)\geq \cdots\geq s_n(A)$ be the singular values of $A$. Let $|||\cdot|||$ denote a  {unitarily invariant norm} on $\mathbb M_n(\mathbb C)$ (that is, for any unitary matrices $U$ and $U'$, we have $|||UAU'|||=|||A|||$). Then there is a unique {symmetric gauge function} $\Phi$ on $\mathbb R^n$ such that $|||A|||=\Phi((s_1(A), \ldots, s_n(A))$ for every $A\in \mathbb M_n(\mathbb C)$. In \cite[Theorem 3.1, Theorem 3.2]{Zietak}, it was shown that for $A\in\mathbb M_n(\mathbb C)$,
\begin{align*}\partial |||A||| = \{& U \text{diag}(d_1,\ldots,d_n) U'^*: A=U \Sigma U'^* \text{ is a singular value decomposition  of A,}\\
& \sum s_i(A) d_i=|||A|||=\Phi((s_1,\ldots, s_n)), \Phi^*((d_1,\ldots, d_n))=1\}.\end{align*}
This was an improvement of Theorem 2 of \cite{Watson}, where an expression of $\partial |||\cdot|||$ was given in  $\mathbb M_n(\mathbb R)$. 
In \cite[Theorem 1]{watson93}, the above result was proved using a different approach. Let $\|\cdot\|$ be the \emph{operator norm} (or the \emph{spectral norm}) on $\mathbb M_n(\mathbb C)$, defined as : $$\|A\|=\max\{\|Au\|: \|u\|=1\}.$$ The operator norm is a unitarily invariant norm and we have the following. For $A\in M_n(\mathbb C)$, 
$$\partial \|A\|=\conv\{uv^*:\|u\|=\|v\|=1, Av=\|A\|u\},$$ where $\conv(S)$ denotes the convex hull of a set $S$. For $1\leq k\leq n$, the Ky Fan $k$-norm $\|\cdot\|_{(k)}$ is defined as $$\|A\|_{(k)}=s_1(A)+\dots +s_k(A).$$
The subdifferential set of the {Ky Fan $k$-norms} on $\mathbb M_n(\mathbb C)$ was obtained in Theorem 2.7 of \cite{2017}. Another useful norm on $\mathbb M_n(\mathbb C)$ is the \emph{numerical radius}, defined as
$$w(A)=\max_{\|x\|=1} \left|\langle x, Ax\rangle\right|.$$ More generally, we consider the \emph{joint numerical radius} of a tuple of matrices defined as follows.
Let $A_1, \dots, A_d\in \mathbb M_n(\mathbb C)$. Let $\boldsymbol{A}=(A_1, \dots, A_d):\mathbb C^n \rightarrow (\mathbb C^n)^d$ be defined as $\boldsymbol{A}x=(A_1x, \dots, A_dx)$ for all $x\in\mathbb C^n$. The {joint numerical radius} of $\boldsymbol{A}$ is defined as 
$$\omega(\boldsymbol{A})=\max_{x\in \mathbb C^n,\ \|x\|=1}\left(\sum\limits_{k=1}^d\big|\langle x| A_kx\rangle\big|^2\right)^{1/2}.$$
For $x\in\mathbb C^n$, let $x\bar{\tens{}} x$ be the rank one operator on $\mathbb C^n$ defined as $x\bar{\tens{}} x(y) = \bra y | x\ket x$ for all $y\in\C^n$. We will use the same symbol $x\bar{\tens{}} x$ for the rank one operator as well as its matrix representation.  Let $\boldsymbol{0}=(0,\dots, 0)\in\mathbb M_n(\C)^d$. The main result of this paper is as follows.
\begin{theorem}\label{main}
Let $\boldsymbol{A}\in\mathbb M_n(\C)^d\setminus\{\boldsymbol{0}\}$. Then
\begin{enumerate}[label=(\alph*)]
\item the subdifferential of $\omega(\cdot)$ at $\boldsymbol{A}$ is given by
\begin{align}
\partial \omega(\boldsymbol{A})=\conv\bigg\{&\frac{1}{\omega(\boldsymbol{A})} \left(\overline{\langle x|A_1x\rangle} x\bar{\tens{}}x, \dots, \overline{\langle x|A_dx\rangle} x\bar{\tens{}}x\right)  :  \|x\|=1,\nonumber\\
&\omega(\boldsymbol{A})=\left(\sum\limits_{k=1}^d\big|\langle x|A_kx\rangle\big|^2\right)^{1/2}\bigg\},\label{subdifferential}
\end{align}
and
\item for $\boldsymbol{B}\in\mathbb M_n(\C)^d$,
\begin{equation*}\lim\limits_{t\rightarrow 0^+}\frac{\omega(\boldsymbol{A}+t\boldsymbol{B}) -\omega(\boldsymbol{A})}{t} = \frac{1}{\omega(\boldsymbol{A})}\max_{ \|x\|=1,\ \omega(\boldsymbol{A})=\left(\sum\limits_{k=1}^d|\langle x|A_kx\rangle|^2\right)^{1/2}}\h\sum\limits_{k=1}^d\langle x|A_kx\rangle\overline{\langle x|B_kx\rangle}.\end{equation*}
\end{enumerate}
\end{theorem}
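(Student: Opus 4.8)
The plan is to realise $\omega$ as the maximum of a compactly indexed family of convex functions and to run a Danskin-type argument. For a unit vector $x$ set $g_x(\boldsymbol B)=\big(\sum_{k=1}^d|\bra x|B_kx\ket|^2\big)^{1/2}$, so that $\omega(\boldsymbol B)=\max_{\|x\|=1}g_x(\boldsymbol B)$. Each $g_x$ is the Euclidean norm on $\C^d$ of the complex-linear map $\boldsymbol B\mapsto(\bra x|B_1x\ket,\dots,\bra x|B_dx\ket)$, hence convex; and since $\omega(\boldsymbol A)>0$, at every maximiser $x$ (that is, $\|x\|=1$ with $g_x(\boldsymbol A)=\omega(\boldsymbol A)$) this map sends $\boldsymbol A$ away from the origin, where the Euclidean norm is smooth. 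Writing $S=\{x:\|x\|=1,\ g_x(\boldsymbol A)=\omega(\boldsymbol A)\}$ and $a_k=\bra x|A_kx\ket$, differentiating gives, for $x\in S$,
\begin{equation*}
Dg_x(\boldsymbol A;\boldsymbol B)=\frac{1}{\omega(\boldsymbol A)}\,\h\sum_{k=1}^d\overline{a_k}\,\bra x|B_kx\ket=\frac{1}{\omega(\boldsymbol A)}\,\h\sum_{k=1}^d\bra x|A_kx\ket\,\overline{\bra x|B_kx\ket}.
\end{equation*}
Using the defining identity $\tr\big((x\bar{\tens{}}x)^*B\big)=\bra x|Bx\ket$, one verifies that this real-linear functional is represented, under the pairing $\langle\langle\boldsymbol C,\boldsymbol D\rangle\rangle=\h\sum_k\tr(C_k^*D_k)$, by the tuple $\boldsymbol C^{(x)}=\tfrac{1}{\omega(\boldsymbol A)}\big(\overline{a_1}\,x\bar{\tens{}}x,\dots,\overline{a_d}\,x\bar{\tens{}}x\big)$, i.e. by the generator in \eqref{subdifferential}. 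The goal is therefore to show $\partial\omega(\boldsymbol A)=K:=\conv\{\boldsymbol C^{(x)}:x\in S\}$, after which (b) is exactly the directional-derivative identity proved along the way.

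First I would settle the easy inclusion $K\subseteq\partial\omega(\boldsymbol A)$. Fix $x\in S$. For arbitrary $\boldsymbol B$ the pairing identity gives
\begin{equation*}
\langle\langle\boldsymbol C^{(x)},\boldsymbol B-\boldsymbol A\rangle\rangle=\frac{1}{\omega(\boldsymbol A)}\,\h\sum_{k=1}^d\overline{a_k}\,\bra x|B_kx\ket-\omega(\boldsymbol A),
\end{equation*}
where I used $\sum_k|a_k|^2=\omega(\boldsymbol A)^2$. By Cauchy--Schwarz in $\C^d$, $\h\sum_k\overline{a_k}\bra x|B_kx\ket\le\omega(\boldsymbol A)\,g_x(\boldsymbol B)\le\omega(\boldsymbol A)\,\omega(\boldsymbol B)$, so the right-hand side above is at most $\omega(\boldsymbol B)-\omega(\boldsymbol A)$; that is, $\omega(\boldsymbol A)+\langle\langle\boldsymbol C^{(x)},\boldsymbol B-\boldsymbol A\rangle\rangle\le\omega(\boldsymbol B)$ for all $\boldsymbol B$, so $\boldsymbol C^{(x)}\in\partial\omega(\boldsymbol A)$. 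Since $\partial\omega(\boldsymbol A)$ is convex, $K\subseteq\partial\omega(\boldsymbol A)$.

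For the reverse inclusion I would use separation. The set $S$ is a closed subset of the unit sphere, hence compact, and $x\mapsto\boldsymbol C^{(x)}$ is continuous, so $K$ is compact and convex; its support function is $\sigma_K(\boldsymbol B)=\max_{x\in S}Dg_x(\boldsymbol A;\boldsymbol B)$, a maximum of a linear functional over a convex hull being attained at a generator. If some $\boldsymbol C_0\in\partial\omega(\boldsymbol A)$ lay outside $K$, then by Hahn--Banach there is a direction $\boldsymbol B$ with $\langle\langle\boldsymbol C_0,\boldsymbol B\rangle\rangle>\sigma_K(\boldsymbol B)$. But the subgradient inequality, divided by $t$ and taken as $t\to0^+$, yields $\langle\langle\boldsymbol C_0,\boldsymbol B\rangle\rangle\le D^+\omega(\boldsymbol A;\boldsymbol B)$, where $D^+\omega(\boldsymbol A;\boldsymbol B)=\lim_{t\to0^+}t^{-1}\big(\omega(\boldsymbol A+t\boldsymbol B)-\omega(\boldsymbol A)\big)$. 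Hence it suffices to prove the Danskin upper bound $D^+\omega(\boldsymbol A;\boldsymbol B)\le\max_{x\in S}Dg_x(\boldsymbol A;\boldsymbol B)$.

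This last bound is the main obstacle, the only genuinely analytic point. For each small $t>0$ choose a maximiser $x_t$ with $\omega(\boldsymbol A+t\boldsymbol B)=g_{x_t}(\boldsymbol A+t\boldsymbol B)$; since $\omega(\boldsymbol A)\ge g_{x_t}(\boldsymbol A)$, the difference quotient is bounded above by $t^{-1}\big(g_{x_t}(\boldsymbol A+t\boldsymbol B)-g_{x_t}(\boldsymbol A)\big)\le Dg_{x_t}(\boldsymbol A+t\boldsymbol B;\boldsymbol B)$, the last step by convexity of $g_{x_t}$. By compactness of the sphere I extract $x_t\to x_0$ along a sequence $t\to0^+$; continuity of $g_\bullet(\cdot)$ forces $g_{x_0}(\boldsymbol A)=\omega(\boldsymbol A)$, so $x_0\in S$, and because the defining map stays away from the origin for $x$ near $S$, the gradient $Dg_x(\boldsymbol C;\boldsymbol B)$ is jointly continuous there, whence $Dg_{x_t}(\boldsymbol A+t\boldsymbol B;\boldsymbol B)\to Dg_{x_0}(\boldsymbol A;\boldsymbol B)$. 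As $D^+\omega(\boldsymbol A;\boldsymbol B)$ exists, it equals this subsequential limit of the quotients, giving $D^+\omega(\boldsymbol A;\boldsymbol B)\le Dg_{x_0}(\boldsymbol A;\boldsymbol B)\le\max_{x\in S}Dg_x(\boldsymbol A;\boldsymbol B)$. The reverse inequality is immediate by testing the quotient against a single fixed maximiser, so in fact $D^+\omega(\boldsymbol A;\boldsymbol B)=\sigma_K(\boldsymbol B)$ for every $\boldsymbol B$. This closes the separation argument, proving $\partial\omega(\boldsymbol A)=K$ and hence (a); and the established identity $D^+\omega(\boldsymbol A;\boldsymbol B)=\max_{x\in S}Dg_x(\boldsymbol A;\boldsymbol B)$ is precisely (b), consistent with \eqref{chap5:7898}. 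Finally, $\h(\overline{a_k}b_k)=\h(a_k\overline{b_k})$ reconciles the two equivalent forms of the summand.
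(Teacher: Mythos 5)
Your proof is correct, and it takes a genuinely different route from the paper's. The paper first \emph{linearizes} the problem: it invokes the identity $\omega(\boldsymbol{A})=\max\left|\sum_{k=1}^d\lambda_k\bra x|A_kx\ket\right|$ over the product of the unit spheres of $\C^n$ and $\C^d$, so that every active function is the modulus of a linear functional $T_{x,\boldsymbol{\lambda}}$; it then applies two quoted results as black boxes --- the affine chain rule (Proposition \ref{chainrule}) and the max-formula for subdifferentials (Proposition \ref{maxsubdifferential}, from Hiriart-Urruty and Lemar\'echal) --- computes the adjoint $T_{x,\boldsymbol{\lambda}}^*$ explicitly, and finally eliminates the auxiliary variable $\boldsymbol{\lambda}$ via the equality case of the Cauchy--Schwarz inequality, with part (b) falling out of the general identity \eqref{chap5:7898}. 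You instead index only by unit vectors $x$, keep the \emph{nonlinear} convex functions $g_x$ (the Euclidean norm of the numerical-form tuple), exploit the smoothness of the Euclidean norm away from the origin (legitimate, since $\omega(\boldsymbol{A})>0$ forces the relevant tuples away from $0$, and you correctly keep all differentiations in a neighbourhood of the maximizers), and in effect reprove Proposition \ref{maxsubdifferential} from scratch in this smooth setting: the inclusion $K\subseteq\partial\omega(\boldsymbol{A})$ by the subgradient inequality plus Cauchy--Schwarz, and the reverse inclusion by a Danskin-type compactness argument (maximizers $x_t$ for $\boldsymbol{A}+t\boldsymbol{B}$, subsequential limits, joint continuity of the gradient) combined with support-function separation. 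What each approach buys: the paper's is shorter because the two cited propositions absorb the analytic work, and the only computation left is the adjoint plus the Cauchy--Schwarz equality step needed to remove $\boldsymbol{\lambda}$; yours is self-contained, never needs the dual sphere or the equality case of Cauchy--Schwarz, and establishes (a) and (b) \emph{simultaneously} from the single identity $D^+\omega(\boldsymbol{A};\boldsymbol{B})=\max_{x\in S}Dg_x(\boldsymbol{A};\boldsymbol{B})$ --- the paper, by contrast, derives (b) from (a) via \eqref{chap5:7898} --- at the cost of checking the smoothness and joint-continuity details by hand. Both arguments hinge on $\boldsymbol{A}\neq\boldsymbol{0}$, and your final remark that $\h(\overline{a_k}b_k)=\h(a_k\overline{b_k})$ correctly reconciles your formula with the statement of (b).
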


For $\boldsymbol{\lambda}=(\lambda_1, \dots, \lambda_d)\in\mathbb C^d$ and $\boldsymbol{B}=(B_1,  \dots, B_d)\in\mathbb M_n(\mathbb C)^d$, let $\boldsymbol{\lambda B} = (\lambda_1 B_1, \dots, \lambda_d B_d)$. As a consequence of Theorem \ref{main}, we obtain the following result.
\begin{corollary}\label{101010}
Let $\boldsymbol{A}=(A_1, \dots, A_d)$, $\boldsymbol{B}=(B_1,  \dots, B_d)\in\mathbb M_n(\mathbb C)^d$. Then 
\begin{equation}\label{1111}
\omega(\boldsymbol{A}+\boldsymbol{\lambda B}) \geq \omega(\boldsymbol{A}) \text{ for all } \boldsymbol{\lambda}\in \C^d
\end{equation}
if and only if there exist $h$ unit vectors $x_1,\ldots,x_h \in \C^n$ with $\omega(\boldsymbol{A})=\left(\sum\limits_{k=1}^d\big|\langle x_i|A_kx_i\rangle\big|^2\right)^{1/2}$ for all $1\leq i\leq h$ and there exist $h$ positive numbers $t_1,\ldots,t_h >0$ with $t_1+\cdots+t_h=1$ such that $\sum\limits_{i=1}^h t_i\langle x_i| A_kx_i\rangle\overline{\langle x_i|B_kx_i\rangle} =0$ for all $1\leq k\leq d$.
\end{corollary}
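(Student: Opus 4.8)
The plan is to read the condition \eqref{1111} as the assertion that $\boldsymbol{0}$ is a global minimizer of the convex function $g(\boldsymbol{\lambda})=\omega(\boldsymbol{A}+\boldsymbol{\lambda B})$ on $\C^d\cong\R^{2d}$, and then to convert this, via the directional derivative formula of Theorem \ref{main}(b), into the statement that the origin lies in a certain convex hull. First I would record that $\omega$ is convex, being a supremum over $\|x\|=1$ of the seminorms $\boldsymbol{A}\mapsto\left(\sum_k|\bra x|A_kx\ket|^2\right)^{1/2}$; since $\boldsymbol{\lambda}\mapsto\boldsymbol{\lambda B}=(\lambda_1B_1,\dots,\lambda_dB_d)$ is $\R$-linear, $g$ is convex. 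Consequently \eqref{1111}, which says exactly $g(\boldsymbol{\lambda})\geq g(\boldsymbol{0})$ for all $\boldsymbol{\lambda}$, is equivalent to the one-sided directional derivative $\lim_{t\to0^+}\big(g(t\boldsymbol{\lambda})-g(\boldsymbol{0})\big)/t$ being nonnegative for every direction $\boldsymbol{\lambda}\in\C^d$.

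Next I would apply Theorem \ref{main}(b) with $\boldsymbol{B}$ replaced by $\boldsymbol{\lambda B}$. Writing $S$ for the set of unit vectors $x$ attaining $\omega(\boldsymbol{A})=\left(\sum_k|\bra x|A_kx\ket|^2\right)^{1/2}$ and using $\overline{\bra x|\lambda_kB_kx\ket}=\overline{\lambda_k}\,\overline{\bra x|B_kx\ket}$, the directional derivative of $g$ at $\boldsymbol{0}$ in direction $\boldsymbol{\lambda}$ equals $\frac{1}{\omega(\boldsymbol{A})}\max_{x\in S}\h\sum_k\overline{\lambda_k}\,c_k(x)$, where $c_k(x)=\bra x|A_kx\ket\,\overline{\bra x|B_kx\ket}$. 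Setting $\boldsymbol{c}(x)=(c_1(x),\dots,c_d(x))$ and $K=\{\boldsymbol{c}(x):x\in S\}\subseteq\C^d$, the condition \eqref{1111} is therefore equivalent to requiring that $\max_{\boldsymbol{z}\in K}\h\sum_k\overline{\lambda_k}z_k\geq 0$ for every $\boldsymbol{\lambda}\in\C^d$.

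The crux is the convex-geometry step: I claim this last condition holds if and only if $\boldsymbol{0}\in\conv(K)$. Since $S$ is a closed (hence compact) subset of the unit sphere and $\boldsymbol{c}$ is continuous, $K$ is compact and $\conv(K)$ is compact convex. If $\boldsymbol{0}\notin\conv(K)$, the separating hyperplane theorem, applied with the real inner product $(\boldsymbol{\lambda},\boldsymbol{z})\mapsto\h\sum_k\overline{\lambda_k}z_k$ on $\C^d\cong\R^{2d}$, produces $\boldsymbol{\lambda}$ with $\h\sum_k\overline{\lambda_k}z_k<0$ for all $\boldsymbol{z}\in K$, contradicting the condition; conversely, if $\boldsymbol{0}=\sum_i t_i\boldsymbol{c}(x_i)$ is a convex combination, then for any $\boldsymbol{\lambda}$ the numbers $\h\sum_k\overline{\lambda_k}c_k(x_i)$ average to $0$, so at least one is nonnegative. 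Unwinding $\boldsymbol{0}\in\conv(K)$ through the definition of convex hull (Carath\'eodory's theorem furnishes the bound $h\leq 2d+1$) yields unit vectors $x_1,\dots,x_h\in S$ and weights $t_1,\dots,t_h>0$ with $\sum_i t_i=1$ and $\sum_i t_i\bra x_i|A_kx_i\ket\,\overline{\bra x_i|B_kx_i\ket}=0$ for every $k$, which is precisely the asserted characterization.

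The main obstacle is the separation step together with the compactness bookkeeping it requires: one must verify that $S$, and hence $K$ and $\conv(K)$, is compact so that the separating hyperplane theorem applies with a strict inequality, and one must check that letting $\boldsymbol{\lambda}$ range over all of $\C^d$ (in particular over $\boldsymbol{\lambda}$, $-\boldsymbol{\lambda}$, and $i\boldsymbol{\lambda}$) realizes every real linear functional on $\C^d\cong\R^{2d}$, so that ``nonnegative on $K$ for every $\boldsymbol{\lambda}$'' is genuinely the dual description of $\boldsymbol{0}\in\conv(K)$.
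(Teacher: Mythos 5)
Your proof is correct, but it takes a genuinely different route from the paper's. The paper deduces the corollary from Theorem \ref{main}(a) by pure subdifferential calculus: with $T_1(\boldsymbol{\lambda})=\boldsymbol{\lambda B}$ and $T_2(\boldsymbol{\lambda})=T_1(\boldsymbol{\lambda})+\boldsymbol{A}$, condition \eqref{1111} says $\boldsymbol{0}$ minimizes the convex function $\omega\circ T_2$, i.e.\ $\boldsymbol{0}\in\partial(\omega\circ T_2)(0,\dots,0)$, which by the chain rule (Proposition \ref{chainrule}) equals $T_1^*\partial\omega(\boldsymbol{A})$; computing $T_1^*(\boldsymbol{C})=\left(\overline{\tr(C_1^*B_1)},\dots,\overline{\tr(C_d^*B_d)}\right)$ and substituting the convex-hull formula \eqref{subdifferential} yields exactly the stated finite convex combinations, the finiteness coming for free from the definition of convex hull. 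You instead start from Theorem \ref{main}(b): convexity reduces \eqref{1111} to nonnegativity of all one-sided directional derivatives of $\boldsymbol{\lambda}\mapsto\omega(\boldsymbol{A}+\boldsymbol{\lambda B})$ at $\boldsymbol{0}$, and you then convert ``the support function of $K=\{(\bra x|A_kx\ket\overline{\bra x|B_kx\ket})_{k}: x\in S\}$ is nonnegative in every direction'' into $\boldsymbol{0}\in\conv(K)$ by a separating-hyperplane argument, with Carath\'eodory extracting the finite combination. In effect you re-prove by hand the convex duality that the paper obtains from Propositions \ref{chainrule} and \ref{maxsubdifferential}; this is legitimate since your compactness bookkeeping ($S$ closed in the unit sphere, hence $K$ and $\conv(K)$ compact) and your verification that every real-linear functional on $\C^d\cong\R^{2d}$ has the form $\boldsymbol{z}\mapsto\h\sum_k\overline{\lambda_k}z_k$ are exactly the needed checks. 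What your route buys is the explicit bound $h\leq 2d+1$, which the paper does not state in this corollary (it proves the analogous bound for the joint operator norm in Theorem \ref{newproof}, via Singer's theorem rather than Carath\'eodory). What the paper's route buys is brevity and a template that transfers verbatim to Theorem \ref{100000} by merely shrinking the domain of $T_1$ to $\C$. One small point: like the paper, you should dispose of $\boldsymbol{A}=\boldsymbol{0}$ separately (both sides of the equivalence then hold trivially, with any unit vector and $h=1$), since Theorem \ref{main} requires $\boldsymbol{A}\neq\boldsymbol{0}$ and your division by $\omega(\boldsymbol{A})$ requires $\omega(\boldsymbol{A})>0$.
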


When $d=1$, the sufficiency of the above condition was given in \cite[Theorem 2.11]{arxiv}. In Section \ref{proofs}, we give proofs of Theorem \ref{main} and Corollary \ref{101010}. We also obtain analogous results for the  \emph{joint operator norm}. Finally, we end with some remarks in Section  \ref{Remarks}.

\section{Proofs}\label{proofs}
To prove Theorem \ref{main}, we will need the following propositions from the subdifferential calculus. 


\begin{prop}\label{chainrule} Let $T_1: \mathbb M_{p,q}(\mathbb C)\rightarrow \mathbb M_{r,s}(\mathbb C)$ be a linear map. Let $B\in \mathbb M_{r,s}(\mathbb C)$. Let $T_2: \mathbb M_{p,q}(\mathbb C)\rightarrow \mathbb M_{r,s}(\mathbb C)$ be the  affine map defined as $T_2(A)=T_1(A)+ B$. Let $g: \mathbb M_{r,s}(\mathbb C) \rightarrow \R$ be a continuous convex function. Then for $A \in \mathbb M_{p,q}(\mathbb C)$,
\begin{equation*}
\partial (g\circ T_2)(A)=T_1^* \partial g(T_2(A)).\label{1.2}
\end{equation*}
\end{prop}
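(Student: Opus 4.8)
The plan is to establish the two inclusions separately, working throughout with the real inner product $\langle X,Y\rangle=\h\tr(Y^*X)$ that appears in the definition of the subdifferential; this is the pairing with respect to which $T_1^*$ is taken. First I would record two preliminary observations. Since $T_1$ is complex linear and this real inner product is nondegenerate (indeed $\h\tr(Z^*Z)=\tr(Z^*Z)\geq 0$, vanishing only at $Z=0$), the adjoint $T_1^*$ coincides with the usual Hilbert--space adjoint, so that $\h\tr((T_1H)^*D)=\h\tr(H^*(T_1^*D))$ for all $H\in\mathbb M_{p,q}(\mathbb C)$ and $D\in\mathbb M_{r,s}(\mathbb C)$. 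Moreover both $\partial(g\circ T_2)(A)$ and $T_1^*\partial g(T_2(A))$ are nonempty, compact and convex: the former because $g\circ T_2$ is a continuous convex function on a finite--dimensional space, and the latter because it is the image of such a set under a linear map.

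The inclusion $T_1^*\partial g(T_2(A))\subseteq\partial(g\circ T_2)(A)$ is a direct computation. Fix $D\in\partial g(T_2(A))$ and set $C=T_1^*D$. For any $A'\in\mathbb M_{p,q}(\mathbb C)$ we have $T_2(A')-T_2(A)=T_1(A'-A)$, so the subgradient inequality for $g$ at $T_2(A)$, evaluated at $T_2(A')$, reads $g(T_2(A'))\geq g(T_2(A))+\h\tr((T_1(A'-A))^*D)$. Rewriting the last term as $\h\tr((A'-A)^*C)$ via the adjoint identity gives exactly the subgradient inequality defining $C\in\partial(g\circ T_2)(A)$.

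For the reverse inclusion I would compare support functions. By \eqref{chap5:7898}, for a continuous convex $f$ the one--sided directional derivative $f'(A;H)=\lim_{t\to 0^+}t^{-1}(f(A+tH)-f(A))$ equals the support function of $\partial f(A)$ in the direction $H$, namely $\max\{\h\tr(H^*C):C\in\partial f(A)\}$. Applying this to $f=g\circ T_2$ and using $T_2(A+tH)-T_2(A)=t\,T_1H$, the directional--derivative chain rule gives $(g\circ T_2)'(A;H)=g'(T_2(A);T_1H)$. Expanding both ends through \eqref{chap5:7898} and the adjoint identity yields
$$\max_{C\in\partial(g\circ T_2)(A)}\h\tr(H^*C)=\max_{D\in\partial g(T_2(A))}\h\tr((T_1H)^*D)=\max_{D\in\partial g(T_2(A))}\h\tr(H^*(T_1^*D)),$$
so $\partial(g\circ T_2)(A)$ and $T_1^*\partial g(T_2(A))$ have the same support function. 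Since two nonempty compact convex sets with equal support functions coincide, the reverse inclusion, and hence the claimed equality, follows.

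The step I would treat as the main obstacle is the reverse inclusion. For a general extended--real--valued convex $g$ one needs a constraint qualification (the range of $T_1$ must meet the relative interior of the domain of $g$) before $\partial(g\circ T_2)(A)$ can be recovered from $\partial g(T_2(A))$. Here this is automatic: $g$ is assumed continuous, hence finite, on all of $\mathbb M_{r,s}(\mathbb C)$, so its domain is the whole space and every directional derivative is finite. The support--function argument then goes through without any extra hypothesis, which is why I prefer it to a direct construction of a subgradient $D$ with $T_1^*D=C$.
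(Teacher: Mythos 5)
Your proof is correct. The paper does not prove this proposition itself but defers to Theorem 4.2.1 of Hiriart-Urruty and Lemar\'echal, whose argument is essentially yours: the easy inclusion $T_1^*\partial g(T_2(A))\subseteq\partial(g\circ T_2)(A)$ via the subgradient inequality, and the reverse inclusion by identifying the one-sided directional derivative with the support function and using that two nonempty compact convex sets with equal support functions coincide; your explicit handling of the real pairing $\h\tr(Y^*X)$ and of the agreement of the real and complex adjoints of $T_1$ supplies precisely the adaptation to $\mathbb M_{p,q}(\mathbb C)$ that the paper only gestures at in its remark about extending the real-space proofs.
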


\begin{prop}\label{maxsubdifferential}
Let $J$ be a compact set in some metric space. Let $\{f_j\}_{j\in J}$ be a collection of continuous convex functions from $\mathbb M_{p,q}(\mathbb C)$ to $\R$ such that for $A\in \mathbb M_{p,q}(\mathbb C)$, the maps $j\rightarrow f_j(A)$ are upper semi-continuous. Let $f: \mathbb M_{p,q}(\mathbb C)\rightarrow \R$ be defined as $f(A)=\sup\{f_j(A):j\in J\}$. Let $J(A)=\{j \in J: f_j(A)=f(A)\}$. Then
\begin{equation*}
\partial f(A)=\conv\left(\cup\left\{\partial f_j(A): j\in J(A)\right\}\right).\label{maxsubdiff}
\end{equation*}
\end{prop}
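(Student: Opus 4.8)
The plan is to reduce the set identity to an identity of support functions and to obtain the latter from the classical max formula for the one-sided directional derivative of a supremum of convex functions. Throughout I view $\mathbb M_{p,q}(\C)$ as a real inner product space under $\langle X,Y\rangle=\h\tr(X^*Y)$, so that $\partial f(A)$ and each $\partial f_j(A)$ are taken with respect to this pairing, and by \eqref{chap5:7898} the directional derivative $f'(A;B):=\lim_{t\to0^+}t^{-1}(f(A+tB)-f(A))$ equals the support function $\max_{C\in\partial f(A)}\h\tr(B^*C)$ of the nonempty compact convex set $\partial f(A)$; the analogue holds for each $f_j$ (write $f_j'(A;B)$ for its directional derivative). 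First I would note that $f=\sup_{j}f_j$ is convex and, being finite by hypothesis, continuous, so that $\partial f(A)$ is nonempty, compact, and convex and \eqref{chap5:7898} applies to $f$.

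Next I would settle the easy inclusion $\supseteq$. If $j\in J(A)$ and $C\in\partial f_j(A)$, then for every $B\in\mathbb M_{p,q}(\C)$,
\[
f(B)\ge f_j(B)\ge f_j(A)+\h\tr((B-A)^*C)=f(A)+\h\tr((B-A)^*C),
\]
where the last equality uses $f_j(A)=f(A)$; hence $C\in\partial f(A)$. As $\partial f(A)$ is convex, this yields $\conv\left(\cup\{\partial f_j(A):j\in J(A)\}\right)\subseteq\partial f(A)$.

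The heart of the argument is the max formula $f'(A;B)=\max_{j\in J(A)}f_j'(A;B)$ for each $B$. The inequality $\ge$ is immediate, since $f(A+tB)\ge f_j(A+tB)$ and $f(A)=f_j(A)$ for $j\in J(A)$. For $\le$ I would fix $B$, take $t_n\downarrow0$ realizing $\limsup_{t\to0^+}t^{-1}(f(A+tB)-f(A))=:\ell$, and choose $j_n\in J$ attaining the supremum defining $f(A+t_nB)$; attainment holds because $J$ is compact and $j\mapsto f_j(A+t_nB)$ is upper semicontinuous. Passing to a subsequence with $j_n\to j_0\in J$, two facts must be established. First, $j_0\in J(A)$: comparing the difference quotient of $f_{j_n}$ at step $t_n$ with that at a fixed step $s>t_n$ and using $f_{j_n}\le f$ forces $f_{j_n}(A)\to f(A)$, whence upper semicontinuity of $j\mapsto f_j(A)$ at $j_0$ gives $f_{j_0}(A)=f(A)$. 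Second, $\ell\le f_{j_0}'(A;B)$: monotonicity of convex difference quotients gives, for each fixed $s>0$, $t_n^{-1}(f(A+t_nB)-f_{j_n}(A))\le s^{-1}(f_{j_n}(A+sB)-f_{j_n}(A))$, and letting $n\to\infty$ while invoking $f_{j_n}(A)\to f(A)$ and the upper semicontinuity bound $\limsup_n f_{j_n}(A+sB)\le f_{j_0}(A+sB)$ yields $\ell\le s^{-1}(f_{j_0}(A+sB)-f_{j_0}(A))$; letting $s\to0^+$ gives $\ell\le f_{j_0}'(A;B)$. This passage to the limit, extracting $j_0\in J(A)$ and transferring the bound to $f_{j_0}$ purely from convexity and upper semicontinuity, is the step I expect to require the most care.

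Finally I would convert the max formula into the set identity through support functions. The support function of $S:=\conv\left(\cup\{\partial f_j(A):j\in J(A)\}\right)$ at $B$ equals $\sup_{j\in J(A)}\max_{C\in\partial f_j(A)}\h\tr(B^*C)=\sup_{j\in J(A)}f_j'(A;B)$, since the support function of a union is the supremum of the support functions and passing to the convex hull leaves it unchanged; by the max formula this equals $f'(A;B)$, the support function of $\partial f(A)$. As two closed convex sets with the same support function coincide, it remains only to check that $S$ is closed. Here I would use that $J(A)=\{j:f_j(A)\ge f(A)\}$ is compact by upper semicontinuity, that the values $\partial f_j(A)$ for $j\in J(A)$ lie in a common ball (a uniform local Lipschitz bound for $\{f_j\}$ near $A$, valid because $f_j\le f$ is locally bounded above and $f_j(A)=f(A)$ is constant on $J(A)$), and that the graph $\{(j,C):j\in J(A),\,C\in\partial f_j(A)\}$ is closed, which follows from upper semicontinuity by passing to the limit in the subgradient inequality. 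Thus $\cup\{\partial f_j(A):j\in J(A)\}$ is compact, so by Carath\'eodory's theorem in the finite-dimensional space $\mathbb M_{p,q}(\C)$ its convex hull $S$ is compact, hence closed. Combined with the inclusion $\supseteq$, this gives the asserted equality.
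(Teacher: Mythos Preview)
Your argument is correct. The paper does not actually prove this proposition; it simply cites Theorem~4.4.2 of Hiriart-Urruty and Lemar\'echal, \textit{Fundamentals of Convex Analysis}, and remarks that the proof extends from $\R^n$ to general normed spaces. What you have written is essentially the classical proof found in that reference: the easy inclusion $\supseteq$, the max formula $f'(A;B)=\max_{j\in J(A)}f_j'(A;B)$ via a compactness/upper-semicontinuity extraction of an active index $j_0$, and then the identification of the two sets through their support functions, together with a verification that $\bigcup_{j\in J(A)}\partial f_j(A)$ is compact so that its convex hull in the finite-dimensional space $\mathbb M_{p,q}(\C)$ is closed. So your route coincides with the source the paper defers to; you have supplied the details the paper omits, and each of the delicate steps (showing $f_{j_n}(A)\to f(A)$ from monotonicity of convex difference quotients, transferring the bound to $f_{j_0}$ via upper semicontinuity, and the closed-graph argument for $\bigcup_j\partial f_j(A)$) is handled correctly.
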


The proofs of these can be found in Theorem 4.2.1 and Theorem 4.4.2 of  \cite{hiriart}. In this book the author deals with real valued convex functions on Euclidean space $\mathbb R^n$. The same proofs can be extended to real valued continuous convex functions on a normed space also (see \cite{convex} for more detail). Now we prove Theorem \ref{main}.

\textit{Proof of Theorem \ref{main}.} 
\begin{enumerate}[label=(\alph*)]
\item In \cite{joint}, it was shown that $\omega(\boldsymbol{A})$ can also be expressed as \begin{equation}\omega(\boldsymbol{A})=\max_{x\in \C^n,\ \|x\|=1}\max_{(\lambda_1, \dots, \lambda_d)\in\C^d, \|(\lambda_1,  \dots, \lambda_d)\|=1}\left| \sum\limits_{k=1}^d \lambda_k\langle x|A_kx\rangle\right|.\label{synk1}\end{equation} 
Let $\boldsymbol{\lambda} = (\lambda_1, \dots, \lambda_d)\in\C^d$ and let $x\in\C^n$. Let $\boldsymbol{C}=(C_1, \dots, C_d)\in\mathbb M_n(\C)^d.$ Let $T_{x,\boldsymbol{\lambda}}: \mathbb M_n(\C)^d\rightarrow \C$ be the linear map defined as
$$T_{x,\boldsymbol{\lambda}}(\boldsymbol{C})=\sum\limits_{k=1}^d \lambda_k\langle x| C_kx\rangle.$$ Let $z\in\C$. Let $g:\C\rightarrow\R$ be the map defined as $g(z)=|z|$.
Let $f_{x,\boldsymbol{\lambda}}: \mathbb M_n(\C)^d \rightarrow \R$ be the map defined as  $f_{x,\boldsymbol{\lambda}}=g$ $\circ\ T_{x,\boldsymbol{\lambda}}$. 
Let $J$ be the compact set $\left\{(x,\boldsymbol{\lambda})\in \C^n\times\C^d :\ \|x\|=1, \|\boldsymbol{\lambda}\|=1\right\}$. Note that for $\boldsymbol{C}\in \mathbb M_n(\C)^d$, the map $(x,\boldsymbol{\lambda})\rightarrow f_{x,\boldsymbol{\lambda}}(\boldsymbol{C})$ is continuous.  Now \eqref{synk1} can be rewritten as
$$\omega(\boldsymbol{A})=\max\{f_{x,\boldsymbol{\lambda}}(\boldsymbol{A}): (x,\boldsymbol{\lambda})\in J\}.$$ 
Let $J(\boldsymbol{A})=\{(x,\boldsymbol{\lambda})\in J: f_{x,\boldsymbol{\lambda}}(\boldsymbol{A})=\omega(\boldsymbol{A})\}.$ 
By Proposition \ref{maxsubdifferential}, 
$$\partial \omega(\boldsymbol{A})=\conv\left(\cup\left\{\partial f_{x,\boldsymbol{\lambda}}(\boldsymbol{A}): (x,\boldsymbol{\lambda})\in J(\boldsymbol{A})\right\}\right).$$
Let $(x,\boldsymbol{\lambda})\in J(\boldsymbol{A})$. Then  $\sum\limits_{k=1}^d \lambda_k\langle x|A_kx\rangle\neq 0$. By Proposition \ref{chainrule}, we get
\begin{eqnarray*}
\partial f_{x,\boldsymbol{\lambda}}(\boldsymbol{A}) &=& T_{x,\boldsymbol{\lambda}}^*\partial g(T_{x,\boldsymbol{\lambda}}(\boldsymbol{A}))\\
&=& T_{x,\boldsymbol{\lambda}}^*\partial g\bigg(\sum\limits_{k=1}^d \lambda_k\langle x| A_kx\rangle\bigg)\\
&=& \left\{T_{x,\boldsymbol{\lambda}}^*\left( \frac{\sum\limits_{k=1}^d \lambda_k\langle x| A_kx\rangle}{\left|\sum\limits_{k=1}^d \lambda_k\langle x|A_kx\rangle\right|}\right)\right\}.
\end{eqnarray*}

Now $T_{x,\boldsymbol{\lambda}}^*: \C\rightarrow \mathbb M_n(\C)^d$ is the unique map satisfying 
\begin{equation}\label{chap5:11111}\tr [(T_{x,\boldsymbol{\lambda}}^*(z))^* \boldsymbol{C}]=\overline{z} \ T_{x,\boldsymbol{\lambda}}(\boldsymbol{C}).\end{equation}

If $T_{x,\boldsymbol{\lambda}}^*(z) = (T_1, \dots, T_d)$, then \eqref{chap5:11111} gives $$\sum\limits_{k=1}^d\tr(T_k^*C_k) = \sum\limits_{k=1}^d\overline{z}\lambda_k\langle x|C_k x\rangle.$$

This implies that for $z\in\C$, $T_{x,\boldsymbol{\lambda}}^*(z)=z\left(\overline{\lambda_1} x\bar{\tens{}}x, \overline{\lambda_2} x\bar{\tens{}}x, \dots, \overline{\lambda_d} x\bar{\tens{}}x\right)$.
So
$$\partial f_{x,\boldsymbol{\lambda}}(\boldsymbol{A})=\left\{ \frac{\sum\limits_{k=1}^d \lambda_k\langle x| A_kx\rangle}{\left|\sum\limits_{k=1}^d \lambda_k\langle x| A_kx\rangle\right|}\left(\overline{\lambda_1} x \bar{\tens{}}x, \overline{\lambda_2} x\bar{\tens{}} x, \dots, \overline{\lambda_d} x \bar{\tens{}}x\right)\right\}.$$

This gives
\begin{equation}\partial \omega(\boldsymbol{A})=\conv\left\{\frac{\sum\limits_{k=1}^d \lambda_k\langle x|A_kx\rangle}{\omega(\boldsymbol{A})}(\overline{\lambda_1} x\bar{\tens{}} x, \dots, \overline{\lambda_d} x\bar{\tens{}} x): (x,\boldsymbol{\lambda})\in J(\boldsymbol{A})\right\}.\label{chap5:75757}\end{equation}

For each $(x,\boldsymbol{\lambda})\in J(\boldsymbol{A})$, we have $$\left(\sum\limits_{k=1}^d|\langle x|A_kx\rangle|^2\right)^{1/2}\leq \omega(\boldsymbol{A}) = \left|\sum\limits_{k=1}^d \lambda_k\langle x|A_kx\rangle\right|\leq\left(\sum\limits_{k=1}^d|\langle x| A_kx\rangle|^2\right)^{1/2}.$$

The last inequality follows by the Cauchy-Schwarz inequality. Hence $$\left|\sum\limits_{k=1}^d \lambda_k\langle x| A_kx\rangle\right| = \left(\sum\limits_{k=1}^d\big|\langle x| A_kx\rangle\big|^2\right)^{1/2}\left(\sum\limits_{k=1}^d|\lambda_k|^2\right)^{1/2}.$$ By the condition of equality in the Cauchy-Schwarz inequality, there exists $\alpha\in\C$ such that $\left(\overline{\lambda_1}, \dots, \overline{\lambda_d}\right) = \alpha \left(\langle x| A_1x\rangle, \dots, \langle x| A_dx\rangle\right).$ This gives $\alpha = \left(\sum\limits_{k=1}^d|\langle x | A_kx\rangle|^2\right)^{-1/2}$. Substituting the value of $\overline{\lambda_k}$ in \eqref{chap5:75757}, we get \eqref{subdifferential}.
\item This follows as an application of \eqref{chap5:7898} and \eqref{subdifferential}.
\end{enumerate}\qed


Using Theorem \ref{main}, we give the proof of Corollary \ref{101010}. The idea is similar to  \cite[Theorem 2.6]{2013} and \cite[Theorem 1]{2014}. 

\textit{Proof of Corollary \ref{101010}.} Without loss of generality, let $\boldsymbol{A}\neq\boldsymbol{0}$. Let $T_1:\C^d\rightarrow \mathbb M_n(\C)^d$ be the linear map defined as $T_1(\boldsymbol{\lambda})=\boldsymbol{\lambda}\boldsymbol{B}$. Let $T_2:\C^d\rightarrow \mathbb M_n(\C)^d$ be defined as the affine map $L(\boldsymbol{\lambda})=T_1(\boldsymbol{\lambda})+\boldsymbol{A}$ for all $\boldsymbol{\lambda}\in\C^d$.
It is easy to see that 
\begin{equation*}
\omega(\boldsymbol{A}+\boldsymbol{\lambda B}) \geq \omega(\boldsymbol{A}) \text{ for all } \boldsymbol{\lambda}\in \C^d\text{ if and only if }\boldsymbol{0}\in \partial(\omega\circ T_2)(0,\dots, 0).
\end{equation*}

By Proposition \ref{chainrule}, we get
\begin{equation}
\omega(\boldsymbol{A}+\boldsymbol{\lambda}\boldsymbol{B}) \geq \omega(\boldsymbol{A}) \text{ for all } \boldsymbol{\lambda}\in \C^d\text{ if and only if }\boldsymbol{0}\in T_1^* \partial \omega(\boldsymbol{A}). \label{extremum}
\end{equation}

The map $T_1^*: \mathbb M_n(\C)^d\rightarrow \C^d$ is given by $T_1^*(\boldsymbol{C}) = \left(\overline{\tr(C_1^*B_1)}, \dots, \overline{\tr(C_d^*B_d)}\right)$ for all $\boldsymbol{C}=(C_1, \dots, C_d)\in \mathbb M_n(\C)^d$. Therefore
\begin{align}T_1^*\partial\omega(\boldsymbol{A})  = \conv\bigg\{& \dfrac{1}{\omega(\boldsymbol{A})}\left(\overline{\langle x|B_1x\rangle} \langle x|A_1x\rangle, \dots, \overline{\langle x|B_dx\rangle} \langle x|A_dx\rangle\right) : \ \|x\|=1,\nonumber\\
&\omega(\boldsymbol{A})=\left(\sum\limits_{k=1}^d\big|\langle x|A_kx\rangle\big|^2\right)^{1/2}\bigg\}.\label{chap5:8989}\end{align}
The result follows by substituting \eqref{chap5:8989} in \eqref{extremum}. \qed

Let $(X,\|\cdot\|)$ be a normed space. An element $x\in X$ is said to be \emph{(Birkhoff-James) orthogonal} to a subspace $W$ in $\|\cdot\|$ if \begin{equation}\|x+y\|\geq \|x\|\text{ for all }y\in W.\label{synn:new}\end{equation} If $W$ is a one-dimensional subspace generated by $z$ and \eqref{synn:new} is satisfied, then we say that $x$ is orthogonal to $z$.  For $(\mathbb M_n(\mathbb C)^d, \omega(\cdot))$, \eqref{1111} is equivalent to saying that $\boldsymbol{A}$ is orthogonal to the subspace $\{\boldsymbol{\lambda B}: \boldsymbol{\lambda}\in\C^d\}$ in $\omega(\cdot)$. In the proof of Corollary \ref{101010}, if we take $T_1:\C\rightarrow \hc^d$ to be the linear map defined as $T_1(\lambda)=\lambda\boldsymbol{B}$  and $T_2:\C\rightarrow \hc^d$ to be the affine map $T_2(\lambda)=T_1(\lambda)+\boldsymbol{A}$, then we get the following characterization of orthogonality in $(\mathbb M_n(\mathbb C)^d, \omega(\cdot))$.

\begin{theorem}\label{100000}
Let $\boldsymbol{A}=(A_1, \dots, A_d), \boldsymbol{B}=(B_1,  \dots, B_d)\in\mathbb M_n(\mathbb C)^d$. Then $\boldsymbol{A}$ is orthogonal to $\boldsymbol{B}$ if and only if there exist $h$ unit vectors $x_1,\ldots,x_h \in \Hil$ with $\omega(\boldsymbol{A})=\left(\sum\limits_{k=1}^d\big|\langle x_i|A_kx_i\rangle\big|^2\right)^{1/2}$ for all $1\leq i\leq h$ and there exist $h$ positive numbers $t_1,\ldots,t_h>0$ with $t_1+\cdots+t_h=1$ such that $
\sum\limits_{k=1}^d \sum\limits_{i=1}^h t_i\langle x_i|A_kx_i\rangle\overline{\langle x_i| B_kx_i\rangle}=0.$
\end{theorem}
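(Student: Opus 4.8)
The plan is to imitate the proof of Corollary \ref{101010} verbatim, except that the map $T_1(\boldsymbol{\lambda})=\boldsymbol{\lambda B}$ on $\C^d$ is replaced by its scalar version $T_1(\lambda)=\lambda\boldsymbol{B}$ on $\C$, exactly as suggested in the paragraph preceding the statement. First I would observe that, with $T_2(\lambda)=T_1(\lambda)+\boldsymbol{A}$, the orthogonality of $\boldsymbol{A}$ to $\boldsymbol{B}$ says precisely that the convex function $\lambda\mapsto(\omega\circ T_2)(\lambda)=\omega(\boldsymbol{A}+\lambda\boldsymbol{B})$ attains its global minimum at $\lambda=0$. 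Since a point is a global minimizer of a continuous convex function if and only if $0$ lies in its subdifferential there, orthogonality is equivalent to $0\in\partial(\omega\circ T_2)(0)$. Applying the chain rule (Proposition \ref{chainrule}) reduces this to $0\in T_1^*\partial\omega(\boldsymbol{A})$, which is the scalar counterpart of \eqref{extremum}.

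The only genuinely new computation is the adjoint. Because $T_1$ now has one-dimensional domain and uses the single shared scalar $\lambda$ in all $d$ slots, its adjoint $T_1^*:\mathbb M_n(\C)^d\to\C$ returns a single scalar obtained by summing over the components, namely $T_1^*(\boldsymbol{C})=\sum_{k=1}^d\overline{\tr(C_k^*B_k)}$ (determined, as in \eqref{chap5:11111}, by the defining relation for the adjoint with respect to the real inner product $\h\tr(\cdot^*\cdot)$). Substituting the explicit description \eqref{subdifferential} of $\partial\omega(\boldsymbol{A})$ from Theorem \ref{main} and using that $T_1^*$ is linear and hence commutes with the convex hull, I would obtain the scalar analogue of \eqref{chap5:8989},
$$T_1^*\partial\omega(\boldsymbol{A})=\conv\left\{\frac{1}{\omega(\boldsymbol{A})}\sum_{k=1}^d\langle x|A_kx\rangle\overline{\langle x|B_kx\rangle} : \|x\|=1,\ \omega(\boldsymbol{A})=\Big(\sum_{k=1}^d\big|\langle x|A_kx\rangle\big|^2\Big)^{1/2}\right\}\subseteq\C.$$

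Finally I would unwind the membership $0\in T_1^*\partial\omega(\boldsymbol{A})$. By the definition of the convex hull, $0$ lies in this set if and only if there exist finitely many maximizing unit vectors $x_1,\dots,x_h$ and weights $t_1,\dots,t_h>0$ with $t_1+\cdots+t_h=1$ such that $\sum_{i=1}^h t_i\cdot\frac{1}{\omega(\boldsymbol{A})}\sum_{k=1}^d\langle x_i|A_kx_i\rangle\overline{\langle x_i|B_kx_i\rangle}=0$. Clearing the positive factor $1/\omega(\boldsymbol{A})$ and interchanging the two finite sums yields exactly the stated condition $\sum_{k=1}^d\sum_{i=1}^h t_i\langle x_i|A_kx_i\rangle\overline{\langle x_i|B_kx_i\rangle}=0$, and since every step is reversible the equivalence holds in both directions.

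These steps are essentially routine once Theorem \ref{main} and Corollary \ref{101010} are in hand; the two points requiring care are the correct form of the scalar adjoint $T_1^*$ (a single scalar summed over the $d$ slots, in contrast to the vector-valued, componentwise adjoint of the corollary) and the standard convex-analytic fact that $0$ belongs to the subdifferential at a point exactly when that point is a global minimizer, which legitimizes the passage from orthogonality to the subdifferential condition. The finiteness of $h$ needs no separate argument, being built into the definition of the convex hull, and can be bounded by Carath\'eodory's theorem in $\C\cong\R^2$ if a bound is desired.
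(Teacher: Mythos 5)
Your proposal is correct and is exactly the paper's intended argument: the paper derives Theorem \ref{100000} by rerunning the proof of Corollary \ref{101010} with the scalar maps $T_1(\lambda)=\lambda\boldsymbol{B}$ and $T_2(\lambda)=T_1(\lambda)+\boldsymbol{A}$, just as you do, and your computation of the adjoint $T_1^*(\boldsymbol{C})=\sum_{k=1}^d\overline{\tr(C_k^*B_k)}$ and the unwinding of $0\in T_1^*\partial\omega(\boldsymbol{A})$ match the paper's reasoning (including the fact that summing over $k$ now produces a single scalar equation rather than $d$ componentwise ones). The only cosmetic omission is the trivial reduction to $\boldsymbol{A}\neq\boldsymbol{0}$, which the corollary's proof handles at the outset and which your ``imitate verbatim'' plan implicitly covers.
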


The joint operator norm of $\boldsymbol{A}$ is equal to $\sup
\left\{\left(\sum\limits_{k=1}^d\|A_kx\|^2\right)^{1/2} : x\in\mathbb C^n, \|x\|=1\right\}$. For the joint operator norm, an analogous result to Theorem \ref{100000} was proved in \cite[Corollary 3.4]{2013}. 
A bounded linear map $T$ from a finite dimensional space $X$ to a Banach space $Y$ can be identified with the continuous function from the unit sphere $S_X$ of $X$ to $Y$, defined by $\hat{T}(x) = T(x)$ for all $x\in S_X$. Let $C(S_X, Y)$ denote the space of continuous functions from $S_X$ to $Y$ with the supremum norm $\|\cdot\|_{\infty}$. Then we have $\|T\|=\|\hat{T}\|_{\infty}$. In particular, the space $\mathbb M_n(\C)^d$ equipped with the joint operator norm is isometrically isomorphic to a closed subspace of $C(S_{\C^n}, (\C^n)^d)$. In \cite{bilinear}, this identification was used to give an alternate proof of orthogonality to one dimensional subspaces in $M_n(\C)$ given in \cite[Theorem 1]{Bhatia}. 
We use this identification to prove the following result for the joint operator norm, analogous to Corollary \ref{101010}.

\begin{theorem}\label{newproof} Let $\boldsymbol{A}, \boldsymbol{B}\in  \mathbb M_n(\C)^d$. Then \begin{equation}\|\boldsymbol{A+\lambda B}\|\geq \|\boldsymbol{A}\| \text{ for all } \boldsymbol{\lambda}\in \C^d\label{chap3:eqn1}\end{equation} if and only if 
there exist $h$ unit vectors $x_1,\ldots,x_h \in \C^n$ with $\|\boldsymbol{A}x_i\|=\|\boldsymbol{A}\|$ for all $1\leq i\leq h$  and there exist $h$ positive numbers $t_1,\ldots,t_h>0$ with $t_1+\cdots+t_h=1$ such that $\sum\limits_{i=1}^h t_i\langle A_kx_i|B_kx_i \rangle=0 \text{ for all } 1\leq k\leq d.$ 
Moreover, we have $1\leq h\leq 2d+1$.
\end{theorem}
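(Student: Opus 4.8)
The plan is to follow the proof of Corollary \ref{101010} almost verbatim, with the joint numerical radius replaced by the joint operator norm, and then to read off the bound on $h$ from Carath\'eodory's theorem. I may assume $\boldsymbol{A}\neq\boldsymbol{0}$: when $\boldsymbol{A}=\boldsymbol{0}$ both sides of the equivalence hold trivially (any unit vector $x_1$ with $t_1=1$ works, and $\|\boldsymbol{A}+\boldsymbol{\lambda B}\|\geq 0=\|\boldsymbol{A}\|$). The first and main task is to compute the subdifferential of the joint operator norm at $\boldsymbol{A}$. Using the identification above, $\boldsymbol{A}$ is the element $x\mapsto\boldsymbol{A}x=(A_1x,\dots,A_dx)$ of $C(S_{\C^n},(\C^n)^d)$ and $\|\boldsymbol{A}\|=\max_{\|x\|=1}\|\boldsymbol{A}x\|$, where $\|\cdot\|$ on $(\C^n)^d$ is the Hilbert norm. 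Writing $f_x(\boldsymbol{C})=\|\boldsymbol{C}x\|$, this exhibits $\|\boldsymbol{A}\|=\max_{x\in J}f_x(\boldsymbol{A})$ as a maximum over the compact set $J=S_{\C^n}$ of continuous convex functions with $(x,\boldsymbol{C})\mapsto f_x(\boldsymbol{C})$ continuous, so Proposition \ref{maxsubdifferential} applies.

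I would then compute $\partial f_x(\boldsymbol{A})$ for $x$ in the active set $J(\boldsymbol{A})=\{x:\|x\|=1,\ \|\boldsymbol{A}x\|=\|\boldsymbol{A}\|\}$. Each $f_x=N\circ E_x$, where $E_x(\boldsymbol{C})=\boldsymbol{C}x$ is linear and $N$ is the Hilbert norm on $(\C^n)^d$. Since $\boldsymbol{A}x\neq\boldsymbol{0}$, $N$ is differentiable at $\boldsymbol{A}x$, so $\partial N(\boldsymbol{A}x)=\{\boldsymbol{A}x/\|\boldsymbol{A}x\|\}=\{\boldsymbol{A}x/\|\boldsymbol{A}\|\}$, and Proposition \ref{chainrule} gives $\partial f_x(\boldsymbol{A})=E_x^*(\boldsymbol{A}x/\|\boldsymbol{A}\|)$. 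A short adjoint computation, exactly as in \eqref{chap5:11111}, identifies this singleton as $\frac{1}{\|\boldsymbol{A}\|}(A_1x\bar{\tens{}}x,\dots,A_dx\bar{\tens{}}x)$, whence
$$\partial\|\boldsymbol{A}\|=\conv\left\{\frac{1}{\|\boldsymbol{A}\|}\left(A_1x\bar{\tens{}}x,\dots,A_dx\bar{\tens{}}x\right):\ \|x\|=1,\ \|\boldsymbol{A}x\|=\|\boldsymbol{A}\|\right\}.$$
For $d=1$ this recovers the known formula $\partial\|A\|=\conv\{uv^*:\ \|u\|=\|v\|=1,\ Av=\|A\|u\}$ (take $v=x$, $u=Ax/\|A\|$), a useful consistency check.

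With $\partial\|\boldsymbol{A}\|$ in hand, I would repeat the mechanism of Corollary \ref{101010}: with $T_1(\boldsymbol{\lambda})=\boldsymbol{\lambda}\boldsymbol{B}$ and $T_2(\boldsymbol{\lambda})=T_1(\boldsymbol{\lambda})+\boldsymbol{A}$, condition \eqref{chap3:eqn1} is equivalent to $\boldsymbol{0}\in\partial(\|\cdot\|\circ T_2)(\boldsymbol{0})=T_1^*\partial\|\boldsymbol{A}\|$ by Proposition \ref{chainrule}. Computing $T_1^*$ as before turns the generators into the points $\frac{1}{\|\boldsymbol{A}\|}\big(\overline{\langle A_1x|B_1x\rangle},\dots,\overline{\langle A_dx|B_dx\rangle}\big)\in\C^d$ indexed by $x\in J(\boldsymbol{A})$. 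Thus \eqref{chap3:eqn1} holds if and only if $\boldsymbol{0}$ lies in the convex hull of this (compact) set $S\subseteq\C^d$, and unwinding this membership gives precisely the unit vectors $x_i\in J(\boldsymbol{A})$ and weights $t_i>0$ with $\sum_i t_i=1$ and $\sum_i t_i\langle A_kx_i|B_kx_i\rangle=0$ for all $k$; both implications are immediate (conjugation conventions only introduce a harmless overall conjugate that does not affect the vanishing condition).

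For the bound on $h$, the key observation is that $S$ sits inside $\C^d$, which as a real vector space has dimension $2d$. If $\boldsymbol{0}\in\conv(S)$, Carath\'eodory's theorem writes $\boldsymbol{0}$ as a convex combination of at most $2d+1$ points of $S$, so $h\leq 2d+1$; and $h\geq 1$ since $J(\boldsymbol{A})\neq\emptyset$ (the maximum defining $\|\boldsymbol{A}\|$ is attained on the compact sphere). The steps needing the most care are the adjoint computations for $E_x^*$ and $T_1^*$ and the verification of the hypotheses of Proposition \ref{maxsubdifferential}; the only genuinely new ingredient beyond Corollary \ref{101010} is this Carath\'eodory count over $\R^{2d}$, which is exactly what produces the explicit bound $2d+1$.
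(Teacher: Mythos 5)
Your proof is correct, but it takes a genuinely different route from the paper's. The paper avoids subdifferential calculus entirely for this theorem: it identifies $\mathbb M_n(\C)^d$ with the joint operator norm as a subspace of $C(S_{\C^n},(\C^n)^d)$ and invokes Singer's characterization of best approximation from finite-dimensional subspaces \cite[Theorem 1.6, p.~201]{Singer}, which directly supplies $h\leq 2d+1$ unit-norm functionals $f_i$, unit vectors $x_i$ and weights $t_i$ with $f_i(\boldsymbol{A}x_i)=\|\boldsymbol{A}\|$ and $\sum_i t_if_i(\boldsymbol{\lambda B}x_i)=0$; the Riesz representation theorem and the equality case of Cauchy--Schwarz then force $y_i=\boldsymbol{A}x_i/\|\boldsymbol{A}\|$ and yield the stated conditions, with the bound $2d+1$ coming packaged inside Singer's theorem (this is also why the paper must first dispose of the degenerate case $\boldsymbol{A}\in\C^d\boldsymbol{B}$, which Singer's hypotheses exclude, whereas your reduction only needs $\boldsymbol{A}\neq\boldsymbol{0}$). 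You instead rerun the machinery of Theorem \ref{main} and Corollary \ref{101010}: Propositions \ref{chainrule} and \ref{maxsubdifferential} give $\partial\|\boldsymbol{A}\|=\conv\left\{\frac{1}{\|\boldsymbol{A}\|}\left(A_1x\bar{\tens{}}x,\dots,A_dx\bar{\tens{}}x\right):\|x\|=1,\ \|\boldsymbol{A}x\|=\|\boldsymbol{A}\|\right\}$, then $\boldsymbol{0}\in T_1^*\partial\|\boldsymbol{A}\|$ unwinds to the vanishing condition, and Carath\'eodory's theorem in $\C^d\cong\R^{2d}$ supplies $h\leq 2d+1$ --- the one ingredient genuinely absent from Corollary \ref{101010}, where no bound on $h$ is asserted. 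Your adjoint computations, the verification of the hypotheses of Proposition \ref{maxsubdifferential}, and the $d=1$ consistency check against $\partial\|A\|=\conv\{uv^*:\|u\|=\|v\|=1,\ Av=\|A\|u\}$ are all sound. As for what each approach buys: yours stays self-contained within the paper's own subdifferential framework and produces, as a by-product, the explicit subdifferential of the joint operator norm, which the paper never records; the paper's route is shorter given Singer's theorem and, more importantly, transfers to the infinite-dimensional settings of Theorem \ref{bound} and Remark 1, where your argument would break down because Proposition \ref{maxsubdifferential} and the attainment of $\|\boldsymbol{A}\|$ rely on compactness of the unit sphere.
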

\begin{proof} If $\boldsymbol{A}\in\mathbb C^d\boldsymbol{B}$, then the theorem holds trivially and $\boldsymbol{A}=0$ if it satisfies any of the conditions stated. So, without loss of generality, $\boldsymbol{A}\notin\mathbb C^d\boldsymbol{B}$. By \cite[Theorem 1.6, p. 201]{Singer}, $\boldsymbol{A}$ is orthogonal to $\C^d\boldsymbol{B}$ if and only if there exist $h$ functionals $f_1, f_2, \dots, f_h\in ((\C^n)^d)^*$ of unit norm with $1\leq h\leq 2d+1$, $h$ unit vectors $x_1, \dots, x_h\in {\C^n}$ and $t_1, \dots, t_h>0$ with $\sum\limits_{i=1}^ht_i =1$ such that \begin{equation}\label{1}f_i(\boldsymbol{A}x_i)=\|\boldsymbol{A}\| \text{ for all } 1\leq i\leq h\end{equation} and \begin{equation}\label{2}\sum\limits_{i=1}^nt_if_i(\boldsymbol{\lambda B}x_i) = 0 \text{ for all } \boldsymbol{\lambda}\in\C^d.\end{equation}
By the Riesz Representation Theorem, there exist unit vectors $y_1, \dots, y_h\in \Hil$ such that for $1\leq i\leq h$, $f_i(x) = \langle y_i| x\rangle$ for all $x\in\Hil$. So \eqref{1} is equivalent to the condition $\langle y_i|\boldsymbol{A}x_i\rangle =\|\boldsymbol{A}\|$. By the condition of equality in the Cauchy-Schwarz inequality, this is equivalent to $y_i = \dfrac{1}{\|\boldsymbol{A}\|}\boldsymbol{A}x_i$. So $\|\boldsymbol{A}x_i\|=\|\boldsymbol{A}\|$. Thus \eqref{2} is equivalent to $\sum\limits_{i=1}^ht_i \langle \boldsymbol{A}x_i| \boldsymbol{\lambda B}x_i\rangle =0$ for all $ \boldsymbol{\lambda}\in\C^d$, that is, for $1\leq k\leq d$, $\sum\limits_{i=1}^h t_i\langle A_kx_i|B_kx_i \rangle=0$.
\end{proof}

Let $\hc, \hk$ be Hilbert spaces. Let $\mathscr B(\hc, \hk)$ be the space of bounded operators from $\hc$ to $\hk$. The notation $\mathscr B(\hc)$ stands for $\mathscr B(\hc, \hc)$. In Theorem 2.8 of \cite{arxiv}, the following characterization is obtained. Let $A\in\mathscr B(\hc)$ be such that $\|A\|=1$, the set $\{x\in\hc:\|Ax\|=\|A\|\}$ is the unit ball of a finite dimensional subspace $\hc_1$ of $\hc$ and $\|A\|_{\hc_1^{\perp}}<\|A\|$. Then for any subspace $\mathcal W$ of $\mathscr B(\hc)$, $A$ is orthogonal to $\mathcal W$ if and only if there exist unit vectors $x_1, \dots, x_h\in\hc_1$ with $\|Ax_i\|=\|A\|$ for all $1\leq i\leq h$ and there exist $t_1, \dots, t_h>0$ with $\sum\limits_{i=1}^ht_i =1$ such that  $\sum\limits_{i=1}^h t_i\langle Ax_i | Bx_i\rangle =0$ for all $B\in\mathcal W$. Along the lines of the proof of Theorem \ref{newproof} above, we get the following generalization of this.
\begin{theorem}\label{bound} Let $A\in\mathscr B(\hc, \hk)$ be such that the set $\{x\in\hc:\|Ax\|=\|A\|\}$ is the unit ball of a finite dimensional subspace $\hc_1$ of $\hc$ and $\|A\|_{\hc_1^{\perp}}<\|A\|$. Then for any subspace $\mathcal W$ of $\mathscr B(\hc, \hk)$, $A$ is orthogonal to $\mathcal W$ if and only if there exist unit vectors $x_1, \dots, x_h\in\hc_1$ with $\|Ax_i\|=\|A\|$ for all $1\leq i\leq h$ and there exist $t_1, \dots, t_h>0$ with $\sum\limits_{i=1}^ht_i =1$ such that  $\sum\limits_{i=1}^h t_i\langle Ax_i | Bx_i\rangle =0$ for all $B\in\mathcal W$. Moreover, $1\leq h\leq 2\dim(\mathcal W)+1$.
\end{theorem}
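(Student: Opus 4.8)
The plan is to mimic the proof of Theorem \ref{newproof}, the one difference being that $\hc$ is no longer finite dimensional, so the unit sphere $S_{\hc}$ is not compact and the restriction map is no longer a global isometry; the hypotheses on $A$ are exactly what is needed to compensate for this. Throughout I may assume $A\neq 0$ (otherwise everything is trivial) and, after normalizing, $\|A\|=1$. If $A\in\mathcal W$ then $-A\in\mathcal W$ and orthogonality forces $\|A\|=\|A+(-A)\|=0$, so I may also assume $A\notin\mathcal W$. Let $S_{\hc_1}$ be the unit sphere of the finite dimensional space $\hc_1$, which is compact, and define the restriction map $\Phi:\mathscr B(\hc,\hk)\to C(S_{\hc_1},\hk)$ by $\Phi(T)(x)=Tx$. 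Since the norm of $A$ is attained precisely on $S_{\hc_1}$, we have $\|\Phi(A)\|_\infty=\|A\|$, whereas $\|\Phi(T)\|_\infty\le\|T\|$ for every $T$.

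The key step is the localization: $A$ is orthogonal to $\mathcal W$ in the operator norm if and only if $\Phi(A)$ is orthogonal to $\Phi(\mathcal W)$ in the supremum norm of $C(S_{\hc_1},\hk)$. One implication is immediate, since $\|A+W\|\ge\|\Phi(A)+\Phi(W)\|_\infty$ for all $W\in\mathcal W$, so orthogonality of $\Phi(A)$ to $\Phi(\mathcal W)$ gives $\|A+W\|\ge\|\Phi(A)\|_\infty=\|A\|$. For the reverse implication I would argue contrapositively: if $\|\Phi(A)+\Phi(W_0)\|_\infty<\|A\|$ for some $W_0\in\mathcal W$, then by convexity $\sup_{x\in S_{\hc_1}}\|(A+tW_0)x\|<\|A\|$ for all small $t>0$; the hypotheses that the norm-attaining set of $A$ is exactly $S_{\hc_1}$ and that $\|A\|_{\hc_1^{\perp}}<\|A\|$ then force $\|(A+tW_0)x\|<\|A\|$ for the remaining unit vectors as well, so that $\|A+tW_0\|<\|A\|$, contradicting $A\perp\mathcal W$. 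I expect this localization to be the main obstacle: one must show that the strict gap between the value $\|A\|$ attained on the compact set $S_{\hc_1}$ and the values of $\|Ax\|$ elsewhere survives small perturbations, which is a compactness argument in the spirit of the proof of Theorem 2.8 of \cite{arxiv} and is where both the finite dimensionality of $\hc_1$ and the strict inequality on $\hc_1^{\perp}$ are used.

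Granting the localization, the proof of necessity concludes exactly as in Theorem \ref{newproof}. Applying Singer's characterization of best approximation \cite[Theorem 1.6, p. 201]{Singer} to $\Phi(A)$ and the finite dimensional subspace $\Phi(\mathcal W)$ of $C(S_{\hc_1},\hk)$ produces functionals $f_1,\dots,f_h$ of unit norm, points $x_1,\dots,x_h\in S_{\hc_1}$ and scalars $t_1,\dots,t_h>0$ with $\sum_{i=1}^h t_i=1$ and $1\le h\le 2\dim\Phi(\mathcal W)+1\le 2\dim\mathcal W+1$, such that $f_i(Ax_i)=\|A\|$ for all $i$ and $\sum_{i=1}^h t_i f_i(Wx_i)=0$ for all $W\in\mathcal W$. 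By the Riesz Representation Theorem write $f_i=\langle y_i|\cdot\rangle$ with $\|y_i\|=1$; then $\langle y_i|Ax_i\rangle=\|A\|$ forces equality in the Cauchy-Schwarz inequality, so $y_i=Ax_i/\|A\|$ and in particular $\|Ax_i\|=\|A\|$ with $x_i\in\hc_1$. Substituting, the condition $\sum_{i=1}^h t_i f_i(Wx_i)=0$ becomes $\sum_{i=1}^h t_i\langle Ax_i|Wx_i\rangle=0$ for all $W\in\mathcal W$, which is the asserted characterization, together with the bound $h\le 2\dim\mathcal W+1$.

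Finally, for sufficiency I would avoid the localization altogether and argue directly: suppose unit vectors $x_1,\dots,x_h\in\hc_1$ with $\|Ax_i\|=\|A\|$ and weights $t_i>0$, $\sum_i t_i=1$, satisfy $\sum_i t_i\langle Ax_i|Wx_i\rangle=0$ for all $W$ in $\mathcal W$. Then for any $W\in\mathcal W$, using that each $x_i$ is a unit vector, $\|A+W\|^2\ge\sum_{i=1}^h t_i\|(A+W)x_i\|^2=\|A\|^2+2\,\h\sum_{i=1}^h t_i\langle Ax_i|Wx_i\rangle+\sum_{i=1}^h t_i\|Wx_i\|^2\ge\|A\|^2$, so $\|A+W\|\ge\|A\|$ and $A$ is orthogonal to $\mathcal W$.
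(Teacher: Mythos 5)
Your proposal is correct and takes essentially the same approach as the paper, which proves Theorem \ref{bound} only by remarking that it goes along the lines of the proof of Theorem \ref{newproof}: localization to the compact sphere of $\hc_1$ (the reverse implication being precisely the content of Theorem 2.8 of \cite{arxiv}, which you correctly identify as the crux and cite), then Singer's theorem in $C(S_{\hc_1},\hk)$ for the bound $h\leq 2\dim(\mathcal W)+1$, followed by Riesz representation and equality in the Cauchy--Schwarz inequality. Your direct convexity computation for sufficiency is a small self-contained addition that the paper leaves implicit.
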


Since the vectors $x_1, \dots, x_h$ can be chosen to be linearly independent, $\text{dim}(\hc)$ is also a bound on $h$.  Theorem 1 of \cite{2014} and Theorem 8.4 of \cite{Zietak2} are special cases of Theorem \ref{bound}. In both the papers, the bound on $h$ was shown to be $dim(\hc)$ and we have been able to find a better bound on $h$. A generalization of the above theorem without any condition on $A$ can be found in \cite[Theorem 1.3]{new}. When $\mathcal W$ is a one dimensional subspace, a characterization of orthogonality was first proved in \cite[Lemma 2.2]{magajna}. It was motivated by the proof of \cite[Lemma 9.14]{Davidson}.  An alternate proof of this can be found in \cite[Remark 3.1]{Bhatia}. For a detailed survey on orthogonality to subspaces and its applications, see \cite{2019, 2020} and the references therein.

\section{Remarks}\label{Remarks}

\textbf{Remark 1}. Let $X$ be a reflexive Banach space and $Y$ be a Banach space. Let $\mathscr K(X, Y)$ be the space of  compact operators from $X$ to $Y$ with the operator norm. For $x\in X$ and a subspace $W$ of $X$, let $\dist(x, W)=\inf\{\|x-w\|:w\in W\}$. Theorem \ref{newproof} also holds for $A\in\mathscr B(X, Y)$ such that $\dist(A, \mathscr K(X, Y))<\|A\|$. This can be seen from \cite[Lemma 3.1]{Wojcik} and the proof of Corollary \ref{101010}. An expression for the subdifferential set of the norm function in $\mathscr B(X, Y)$ for a reflexive Banach space $X$ was also obtained in \cite[Theorem 3.2]{Wojcik}.



\textbf{Remark 2}. Birkhoff-James orthogonality is closely related to the notion of norm parallelism. In a normed space, an element $x$ is said to be \emph{norm parallel} to another element $y$ if there exists $\lambda\in\mathbb C$ such that $|\lambda|=1$ and $\|x+\lambda y\|=\|x\|+\|y\|$. Let $\boldsymbol{A}, \boldsymbol{B}\in\mathbb M_n(\C)^d$. Then by \cite[Theorem 2.4]{normparallel} and Theorem \ref{100000}, we get that $\boldsymbol{A}$ is norm parallel to $\boldsymbol{B}$ in the joint numerical radius if and only if there exists a unit vector $x\in\C^n$ such that $\big|\sum_{k=1}^d \overline{\bra x|B_kx\ket}\bra x|A_kx\ket\big|=\omega(\boldsymbol{A})\omega(\boldsymbol{B})$. The same characterization also holds for $\boldsymbol{A}, \boldsymbol{B}\in\mathscr B(\hc, \hc^d)$ for a Hilbert space $\hc$. The proof can be done along the lines of the proof of \cite[Theroem 2.2]{radiusparallel}.

\end{document}